\DeclarePairedDelimiter{\floor}{\lfloor}{\rfloor}
\newtheorem*{thm*}{Theorem}
\newtheorem*{conj*}{Conjecture}
\newtheorem{theorem}{Theorem}[section]
\newtheorem{lemma}{Lemma}[section]
\newtheorem{proposition}[theorem]{Proposition}
\newtheorem*{rmk}{Remark}
\newtheorem*{example}{Example}
\newcommand{\Z}{\mathbb{Z}}
\newcommand{\Q}{\mathbb{Q}}
\newcommand{\R}{\mathbb{R}}
\newcommand{\N}{\mathbb{N}}
\newcommand{\Vol}{\operatorname{Vol}}
\numberwithin{equation}{section}
\begin{document}
\title{Modular Forms and Ellipsoidal $T$-Designs}
\author{Badri Vishal Pandey}
\address{Department of Mathematics, University of Virginia, Charlottesville, VA 22904}
\email{bp3aq@virginia.edu}

\begin{abstract} In recent work, Miezaki introduced the notion of a {\it spherical $T$-design} in $\R^2$, where $T$ is a potentially infinite set. As an example, he offered the $\Z^2$-lattice points with fixed integer norm (a.k.a. shells). These shells are $maximal$ spherical $T$-designs, where $T=\Z^+\setminus 4\Z^+$. We generalize the notion of a spherical $T$-design to special ellipses, and extend Miezaki's work to the norm form shells for rings of integers of imaginary quadratic fields with class number 1.
\end{abstract}

\maketitle
\section{Introduction and statement of results}\label{Intro}
$Spherical$ $t$-$designs$ were introduced in 1977 by Delsarte, Goethals and Seidel \cite{Delsarte}, and they have played an important role in algebra, combinatorics, number theory and quantum mechanics (for background see \cite{Bannai}, \cite{BOT}, \cite{Chen}, \cite{quantum}, \cite{Go}, \cite{Miezaki}). A spherical $t$-design is a nonempty finite set of points on the unit sphere with the property that the average value of any real polynomial of degree $\leq t$ over this set equals the average value over the sphere. Namely, if $S^{n-1}$ denotes the unit sphere in $\mathbb{R}^n$ centered at the origin, then a finite nonempty subset $X\subset S^{n-1}$ is a spherical $t$-design if
\begin{equation}\label{E1}
    \dfrac{1}{|X|}\sum_{x\in X}P(x)= \dfrac{1}{\Vol(S^{n-1})}\int_{S^{n-1}} P(x)d\sigma(x)
\end{equation}
for all polynomials $P(x)$ of degree $\leq t$. The right-hand side of ($\ref{E1}$) is the usual surface integral over $S^{n-1}$. In general, a finite nonempty subset $X$ of $S_{n-1}(r)$, the sphere of radius $r$ centered at the origin, is a spherical $t$-design if $\frac{1}{r}X$ satisfies (\ref{E1}). Since a spherical $t$-design is also a spherical $t^{\prime}$-design for all $ t^{\prime}\leq t$, we say that $X$ has $strength$ $t$ if it is the maximum of all such numbers.

Delsarte, Goethals and Seidel developed a very simple criterion for determining spherical $t$-designs. This criterion involves {\it homogeneous harmonic} polynomials of bounded degree. A polynomial in $n$ variables is $harmonic$ if it is annihilated by the Laplacian operator $\Delta:=\sum^{n}_{i=1}\partial^2/\partial x^2_i$, and they showed \cite{Delsarte} that $X\subset S^{n-1}$ is a spherical $t$-design if 
\begin{equation}\label{P}
    \sum_{x\in X}P(x) = 0
\end{equation}
for all homogeneous harmonic polynomials $P(x)$ of nonzero degree $\leq t$. This criterion is a consequence of two results from harmonic analysis. The first result is the mean value property for harmonic functions \cite[p.~5]{HFT}, which implies that the integral of a harmonic polynomial over a sphere centered at the origin vanishes, combined with the fact that homogeneous polynomials of fixed degree are spanned by certain harmonic polynomials \cite[Th. ~5.7]{HFT}.

In view of this framework, it is natural to ask whether there are generalizations of spherical $t$-designs to other curves, surfaces and varieties. Here we consider certain $ellipsoids\footnote{We do not use the term $ellipse$ to avoid possible confusion that might arise with the term $elliptical$.}$ in dimension two. To be precise, for square-free $D\geq 1$ we define the norm $r$ ellipses 
\begin{equation}
    C_D(r) :=
\begin{cases}
\{(x,y)\in \R^2 : x^2+Dy^2=r\} & \text{ if } D\equiv 1,2\pmod{4},\\
\{(x,y)\in \R^2 : x^2+xy+\frac{1+D}{4}y^2=r\} & \text{ if } D\equiv 3\pmod{4}.
\end{cases}
\end{equation}
\begin{rmk}
These ellipses arise from certain imaginary quadratic orders.
\end{rmk}
For $D\equiv 1,2\pmod 4$, we say that a finite nonempty subset $X\subset C_D(r)$ is an $ellipsoidal$ $t$-$design$ if
\begin{equation}\label{HE1}
    \dfrac{1}{|X|}\sum_{(x,y)\in X}P(x,y)=
\dfrac{1}{2\pi\sqrt{D}}\mathlarger\int_{C_D(r)}\dfrac{P(x,y)}{\sqrt{x^2/D^2+y^2}}d\sigma(x,y) 
\end{equation} for all polynomials $P(x,y)$ of degree $\leq t$ over $\R$.
For $ D\equiv 3\pmod 4$, instead we require
\begin{equation}\label{HE2}
\dfrac{1}{|X|}\sum_{(x,y)\in X}P(x,y)=
\dfrac{\sqrt{D}}{\pi}\mathlarger\int_{C_D(r)}\dfrac{P(x,y)}{\sqrt{20x^2+(D^2+2D+5)y^2+(20+4D)xy}}d\sigma(x,y).
\end{equation}
Here the right-hand sides are line integrals. As in the case of spherical $t$-designs, every ellipsoidal $t$-design is also an ellipsoidal $t^{\prime}$-design for all $ t^{\prime}\leq t$, and the maximum of all such $t$'s is called the $strength$ of $X$.These definitions coincide with the notion of a spherical $t$-design when $D=1$. 

In analogy to Delsarte, Goethals and Seidel, we have a natural criterion for confirming ellipsoidal $t$-designs. To this end, we consider the 2-dimensional real vector space \begin{equation}\label{D-Harmonic}
   H^{\R}_{D,j}[x,y] := 
\begin{cases}
\langle \text{Re}(x+\sqrt{-D}y)^j,\text{Im}(x+\sqrt{-D}y)^j\rangle & \text{ if } D\equiv 1,2 \pmod{4},\\
\langle \text{Re}(x+\frac{1+\sqrt{-D}}{2}y)^j,\text{Im}(x+\frac{1+\sqrt{-D}}{2}y)^j\rangle & \text{ if } D\equiv 3 \pmod{4}.
\end{cases} 
\end{equation}
In terms of these vector spaces of polynomials, we have the following ellipsoidal $t$-design criterion.
\begin{theorem}\label{CD}
A finite nonempty set $X\subset C_D(r)$ is an ellipsoidal $t$-design if \begin{equation*}
    \sum_{x\in X}P(x,y)=0
\end{equation*}
for all $P(x,y)\in H^{\R}_{D,j}[x,y]$ for all $0<j\leq t$.
\end{theorem}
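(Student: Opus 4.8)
The plan is to transport the statement to the unit circle, where it becomes the classical Delsarte--Goethals--Seidel criterion \eqref{P}. Identifying $\R^2$ with $\C$, I would introduce the invertible real-linear map
\[
T(x,y):=\begin{cases}(x,\ \sqrt{D}\,y) & \text{if } D\equiv 1,2\pmod 4,\\[2pt] \bigl(x+\tfrac12 y,\ \tfrac{\sqrt{D}}{2}\,y\bigr) & \text{if } D\equiv 3\pmod 4.\end{cases}
\]
Since $x^2+Dy^2=|x+\sqrt{-D}\,y|^2$ and $x^2+xy+\tfrac{1+D}{4}y^2=\bigl|x+\tfrac{1+\sqrt{-D}}{2}y\bigr|^2$, the quadratic form cutting out $C_D(r)$ is exactly $|T(x,y)|^2$, so $T$ restricts to a bijection of $C_D(r)$ onto the round circle $S^1(\sqrt{r})$. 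Writing $(u,v)=T(x,y)$, a direct substitution gives $x+\sqrt{-D}\,y=u+iv$ (respectively $x+\tfrac{1+\sqrt{-D}}{2}y=u+iv$), so $P\mapsto P\circ T^{-1}$, a degree-preserving linear automorphism of the space of real polynomials of degree $\le t$, carries the two generators of $H^{\R}_{D,j}[x,y]$ precisely to $\mathrm{Re}\,(u+iv)^j$ and $\mathrm{Im}\,(u+iv)^j$; for $j\ge 1$ these form a basis of the two-dimensional space of homogeneous harmonic polynomials of degree $j$ on $\R^2$.

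Next I would identify the weighted line integrals in \eqref{HE1} and \eqref{HE2} with the uniform average over $S^1(\sqrt{r})$. Parametrizing $S^1(\sqrt{r})$ by $(u,v)=(\sqrt{r}\cos\theta,\sqrt{r}\sin\theta)$ and pulling this back through $T^{-1}$ yields a parametrization of $C_D(r)$; computing the arc-length element $d\sigma$ along $C_D(r)$ in this parametrization gives
\[
\frac{d\sigma}{\sqrt{x^2/D^2+y^2}}=\sqrt{D}\,d\theta,\qquad \frac{d\sigma}{\sqrt{20x^2+(D^2+2D+5)y^2+(20+4D)xy}}=\frac{d\theta}{2\sqrt{D}}
\]
in the cases $D\equiv 1,2$ and $D\equiv 3$ respectively, so the constants $\tfrac{1}{2\pi\sqrt{D}}$ and $\tfrac{\sqrt{D}}{\pi}$ are exactly what is needed to make the right-hand sides of \eqref{HE1} and \eqref{HE2} equal $\tfrac{1}{2\pi}\int_0^{2\pi}(P\circ T^{-1})(\sqrt{r}\cos\theta,\sqrt{r}\sin\theta)\,d\theta$, the average of $P\circ T^{-1}$ over $S^1(\sqrt{r})$. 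Since $P$ runs over all real polynomials of degree $\le t$ exactly when $P\circ T^{-1}$ does, it follows that $X\subset C_D(r)$ is an ellipsoidal $t$-design if and only if $T(X)\subset S^1(\sqrt{r})$ is a spherical $t$-design.

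Finally I would invoke the criterion \eqref{P}. Assuming $\sum_{(x,y)\in X}P(x,y)=0$ for every $P\in H^{\R}_{D,j}[x,y]$ and every $0<j\le t$, set $Y:=T(X)$; by the first paragraph $\sum_{(u,v)\in Y}Q(u,v)=0$ for $Q=\mathrm{Re}\,(u+iv)^j$ and $Q=\mathrm{Im}\,(u+iv)^j$, hence for every homogeneous harmonic polynomial $Q$ of nonzero degree $\le t$. By \eqref{P} applied on $S^1$ (rescaled to radius $\sqrt{r}$), $Y$ is a spherical $t$-design, so by the second paragraph $X$ is an ellipsoidal $t$-design. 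The only genuine computation is the middle step — verifying that those two weight functions are the correct Jacobian factors relating arc length on $C_D(r)$ to arc length on $S^1(\sqrt{r})$, and that the stated normalizing constants emerge — and I expect that (entirely elementary) bookkeeping to be the main obstacle; the rest is a formal transfer of the spherical theory along the linear map $T$.
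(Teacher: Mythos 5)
Your proposal is correct and follows essentially the same route as the paper: the same linear change of variables $T$ sending $C_D(r)$ to the round circle underlies both the paper's Lemma \ref{H-Span} and its explicit parametrization of $C_D(1)$, and your two Jacobian identities are exactly the computations carried out in the paper's proof. The only difference is organizational --- you package the argument as a full equivalence with spherical $t$-designs and invoke the Delsarte--Goethals--Seidel criterion as a black box, whereas the paper proves the spanning decomposition for the spaces $H^{\R}_{D,j}[x,y]$ and then verifies the vanishing of the weighted line integrals directly from the harmonicity of $\mathrm{Re}(x+iy)^j$ --- but the mathematical substance is identical.
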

\begin{rmk}
1)Observe that if $X\subset S^{1}$ is a spherical $t$-design, then $Y=\{(x,y/\sqrt{D})|(x,y)\in X\}\subset C_D$ (resp. $Y=\{(x+y/\sqrt{D},2y/\sqrt{D}|(x,y)\in X\}\subset C_D$) is an ellipsoidal $t$-design for $D\equiv 1,2 \pmod{4}$ (resp. $D\equiv 3 \pmod{4}$). Therefore, the existence of a spherical $t$-design implies the existence of a corresponding ellipsoidal $t$-design. In fact, there is a one-to-one correspondence between spherical $t$-designs and ellipsoidal $t$-designs. However, the proof of Theorem \ref{CD} is not a direct consequence because care is required for justifying the role of the vector spaces $H^{\R}_{D,j}[x,y]$.\\
2)Since there is one-to-one correspondence between spherical and ellipsoidal $t$-designs, we get a lower bound \cite[pg 2]{Delsarte} on the size of ellipsoidal $t$-design $X$,
$$|X|\geq t+1. $$
\end{rmk}

Recently, Miezaki in \cite{Miezaki} introduced a generalization of the notion of spherical $t$-designs. Instead of restricting to polynomials of degree $\leq t$, he considered harmonic polynomials of degree $j\in T\subset \N$, where $T$ is a potentially infinite set. The main theorem from \cite{Miezaki} gives infinitely many spherical $T$-designs for $T :=\Z^{+}\setminus 4\Z^+$ in dimension two. Namely, he considered norm $r$ shells, integer points on $x^2+y^2=r$ for fixed $r\in \Z^+$. He showed that these $r$-shells are spherical $T$-designs. Moreover, these sets have strength $T$, meaning that (\ref{P}) fails if any multiple of 4 is added to $T$. His proof makes use of theta functions arising from complex multiplication by $\mathbb{Z}[i]$. 

We generalize Miezaki's work to ellipsoidal $T$-designs. We call $X\subset C_D$ an $ellipsoidal$ $T$-$design$ if the condition in Theorem \ref{CD} is satisfied for all polynomials in $H^{\R}_{D,j}[x,y]$ with $j\in T$. We say $X$ has strength $T$ if it is maximal among such sets. For each square-free positive integer $D$, let $\mathcal{O}_D$ be the ring of integers of $\Q(\sqrt{-D})$. In particular, this means that
\begin{equation}\label{OD}
    \mathcal{O}_D=\begin{cases}
\Z[\sqrt{-D}] \ \ \ \ \ & \text{ if } D\equiv 1,2 \pmod{4}, \\
\Z[\frac{1+\sqrt{-D}}{2}] \ \ \ \ \  & \text{ if } D\equiv 3 \pmod{4}.
\end{cases}
\end{equation}
We consider $D\in\{1,2,3,7,11,19,43,67,163\}$, the square-free positive integers for which $\mathcal{O}_D$ has class number 1. To make this precise, we define the {\it norm $r$ shells} in
$C_D(r)$ by
\begin{equation}\label{NormShells}
\Lambda_D^r:= \mathcal{O}_D \cap C_D(r).
\end{equation}
Generalizing Miezaki’s work for $D=1$, we obtain the following theorem.
\begin{theorem}\label{Main}
If $D\in \{1, 2, 3, 7, 11, 19, 43, 67, 163\},$ then every non-empty shell
$\Lambda_D^r$ is an ellipsoidal $T_D$ design with strength $T_D$, where
$$
T_D:=\begin{cases} \Z^+\setminus 4\Z^+ \ \ \ \ \ &{\text {\rm if}}\ D=1,\\
               \Z^+\setminus 6\Z^{+} \ \ \ \ &{\text {\rm if}}\ D=3,\\
              \Z^+\setminus 2\Z^+ \ \ \ \ &{\text {\rm otherwise.}}
\end{cases}
$$
\end{theorem}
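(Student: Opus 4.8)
The plan is to transport the problem from points of $C_D(r)$ to algebraic integers and then invoke Theorem~\ref{CD}. Identify $(x,y)\in\Lambda_D^r$ with $\alpha=x+\sqrt{-D}\,y\in\mathcal O_D$ when $D\equiv1,2\pmod4$ and with $\alpha=x+\frac{1+\sqrt{-D}}{2}y\in\mathcal O_D$ when $D\equiv3\pmod4$; in both cases the quadratic form defining $C_D(r)$ is $N(\alpha)$, so this is a bijection onto $\mathcal O_D^{(r)}:=\{\alpha\in\mathcal O_D:N(\alpha)=r\}$. Under it the two spanning polynomials of $H^{\R}_{D,j}[x,y]$ are $\text{Re}(\alpha^{\,j})$ and $\text{Im}(\alpha^{\,j})$ as functions of $(x,y)$, so for each $P\in H^{\R}_{D,j}[x,y]$ the quantity $\sum_{(x,y)\in\Lambda_D^r}P(x,y)$ is a fixed real linear combination of the real and imaginary parts of
$$S_j(r):=\sum_{\alpha\in\mathcal O_D^{(r)}}\alpha^{\,j}.$$
By Theorem~\ref{CD} it therefore suffices to show (i) $S_j(r)=0$ for every $j\in T_D$, and (ii) $S_{j_0}(r)\neq0$ for every $j_0\in\Z^+\setminus T_D$ with $\Lambda_D^r\neq\emptyset$. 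Claim (i) is immediate: writing $w:=|\mathcal O_D^\times|$ we have $w=4,6,2$ according as $D=1$, $D=3$, or otherwise, with $\mathcal O_D^\times$ cyclic of order $w$ and $\Z^+\setminus T_D=w\Z^+$; if $j\in T_D$ then $w\nmid j$, so a generator $\zeta$ of $\mathcal O_D^\times$ has $\zeta^{\,j}\neq1$, and since multiplication by $\zeta$ preserves norms it permutes $\mathcal O_D^{(r)}$, giving $S_j(r)=\zeta^{\,j}S_j(r)$ and hence $S_j(r)=0$. This already proves that every non-empty $\Lambda_D^r$ is an ellipsoidal $T_D$-design.

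For claim (ii), fix $j_0\in w\Z^+$. Since $\mathcal O_D$ has class number $1$, every nonzero ideal $\mathfrak a=(\alpha)$ has exactly $w$ generators, all of the form $\zeta\alpha$ with $\zeta\in\mathcal O_D^\times$, and $\zeta^{j_0}=1$; grouping $\mathcal O_D^{(r)}$ by the ideals it generates yields
$$S_{j_0}(r)=w\sum_{\substack{\mathfrak a\subseteq\mathcal O_D\\ N\mathfrak a=r}}\psi(\mathfrak a),\qquad \psi\big((\alpha)\big):=\alpha^{\,j_0},$$
where $\psi$ is a well-defined Hecke character of $\Q(\sqrt{-D})$ of infinity type $j_0$; equivalently $S_{j_0}(r)/w$ is the $r$-th Fourier coefficient of the weight-$(j_0+1)$ CM modular form $\sum_{\mathfrak a}\psi(\mathfrak a)q^{N\mathfrak a}$, which is where modular forms enter, paralleling Miezaki's use of theta series when $D=1$. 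The arithmetic function $a(r):=\sum_{N\mathfrak a=r}\psi(\mathfrak a)$ is multiplicative by unique factorization of ideals, so it suffices to show $a(p^e)\neq0$ whenever $p^e$ can occur in the factorization of a norm. If $p$ is inert then being a norm forces $e$ even, and $a(p^e)=\psi\big((p^{e/2})\big)=p^{e j_0/2}\neq0$; if $p$ is ramified then $\mathfrak p=(\gamma)$ is the unique ideal above $p$ and $a(p^e)=\gamma^{\,e j_0}\neq0$; if $p=\mathfrak p\bar{\mathfrak p}$ splits with $\mathfrak p=(\pi)$ then, since the ideals of norm $p^e$ are exactly the $\mathfrak p^i\bar{\mathfrak p}^{\,e-i}$,
$$a(p^e)=\sum_{i=0}^{e}\pi^{\,i j_0}\bar\pi^{\,(e-i)j_0}=\bar\pi^{\,e j_0}\sum_{i=0}^{e}u^{\,i},\qquad u:=(\pi/\bar\pi)^{j_0}.$$
As $p$ is unramified we have $\mathfrak p\neq\bar{\mathfrak p}$, so $\pi/\bar\pi$ is not a unit of $\mathcal O_D$ and hence, the roots of unity of $\Q(\sqrt{-D})$ being precisely its units, not a root of unity; thus $u$ is not a root of unity, so $u^{e+1}\neq1$ and $\sum_{i=0}^e u^i=(u^{e+1}-1)/(u-1)\neq0$. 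Multiplying over $p\mid r$ gives $a(r)\neq0$, hence $S_{j_0}(r)\neq0$ whenever $\Lambda_D^r\neq\emptyset$, so some $P\in H^{\R}_{D,j_0}[x,y]$ satisfies $\sum_{(x,y)\in\Lambda_D^r}P(x,y)\neq0$. As this holds for every $j_0\notin T_D$, the strength of each non-empty shell is exactly $T_D$.

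The routine parts are the dictionary of the first paragraph and the unit argument for~(i); the one place where cancellation could in principle occur, and hence the crux, is the non-vanishing of $a(r)$ at split primes, which comes down to the elementary fact that $\pi/\bar\pi$ is never a root of unity — equivalently, that the CM form $\sum_{\mathfrak a}\psi(\mathfrak a)q^{N\mathfrak a}$ has a non-zero $r$-th coefficient exactly when $r$ is a norm from $\Q(\sqrt{-D})$.
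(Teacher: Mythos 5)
Your proof is correct, and while it shares the paper's overall skeleton (identify $\Lambda_D^r$ with the norm-$r$ elements of $\mathcal{O}_D$, kill all $j$ with $u_D\nmid j$ by averaging over units, then reduce the non-vanishing for $j_0\in u_D\Z^+$ to prime powers by multiplicativity), the crux is handled by a genuinely different argument. The paper gets multiplicativity by quoting that the theta series $\Theta(\Lambda_D,\tfrac{1}{u_D}R_{D,j};z)$ is a newform, and proves non-vanishing at an odd prime $p$ with $\Lambda_D^p\neq\emptyset$ by a congruence: a binomial expansion gives $R_{D,j}(x_p,y_p)\equiv 2^{j-2}x_p^j\not\equiv 0\pmod p$, and then the Hecke relation $a(D,j,p^{\alpha})\equiv a(D,j,p)^{\alpha}\pmod p$ lifts this to prime powers; the prime $2$ (which fails the mod-$p$ argument) is treated separately, including an induction mod $2$ for $D=7$, and inert primes are handled via the Hecke recursion with $a(D,j,p)=0$. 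You instead derive multiplicativity of $a(r)=\sum_{N\mathfrak{a}=r}\psi(\mathfrak{a})$ directly from unique factorization of ideals, and at split primes you evaluate $a(p^e)$ as a geometric series with ratio $u=(\pi/\bar{\pi})^{j_0}$, observing that $\pi/\bar{\pi}$ cannot be a root of unity (else $(\pi)=(\bar{\pi})$, contradicting $\mathfrak{p}\neq\bar{\mathfrak{p}}$), so the series never vanishes. This is the classical argument for non-vanishing of CM-form coefficients; it is uniform in $p$, so it absorbs the paper's special-casing of $p=2$ and $D=7$, and it avoids invoking the eigenform machinery. What the paper's route buys in exchange is an explicit congruence $a(D,j,p^{\alpha})\equiv (2^{j-2}x_p^j)^{\alpha}\pmod p$, which is more information than mere non-vanishing. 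The only cosmetic quibble with your write-up is the phrase ``$\pi/\bar{\pi}$ is not a unit of $\mathcal{O}_D$'': a priori $\pi/\bar{\pi}$ need not lie in $\mathcal{O}_D$ at all; the clean statement is that if it were a root of unity it would lie in $\mathcal{O}_D^{\times}$, forcing $(\pi)=(\bar{\pi})$ — which is clearly what you intend.
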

\begin{rmk}
The method used here seems to be well-poised only for the dimension 2 cases. 
It would be interesting to obtain higher dimensional analogues.
\end{rmk}
\begin{example}
We consider $D=3$, and $r=691$. Then we have
\begin{align*}
    \Lambda^{691}_3=&\{(11,19),(-11,-19),(19,11),(-19,-11),(11,-30),(-11,30),(30,-19),(-30,19),\\
    &(30,-11),(-30,11),(19,-30),(-19,30)\}.
\end{align*}

We consider the polynomial $P(x,y)=2x^2+3462xy+1729y^2\in H^{\R}_{3,2}[x,y]$, and we find that $\sum_{(x,y)\in \Lambda^{691}_3}P(x,y)=0$ which shows that $\Lambda^{691}_3$ is an elliptical $2$-design and $2\in T_3$.
On the other hand, Theorem \ref{Main} implies that $\Lambda^{691}_3$ is not an ellipsoidal $6$-design.To see this we choose $Q(x,y)=2x^2+6x^5y-15x^4y^2-40x^3y^3-15x^2y^4+6xy^5+2y^6\in H^{\R}_{3,6}(x,y),$ and we find that $\sum_{(x,y)\in \Lambda^{691}_3}Q(x,y)=-4818834696\not=0$.
\end{example}
In Section $2$ we prove Theorem \ref{CD}, criterion for confirming that a set is an ellipsoidal $t$-design, and in Section $3$ we recall the theory of theta functions arising from complex multiplication, and we prove Theorem \ref{Main}.
\section*{Acknowledgement}
I would like to thank Prof Ken Ono for suggesting me this problem and guiding through. I also thank Will Craig and Wei-Lun Tsai for reviewing my paper and giving useful comments. I thank Matthew McCarthy for helping me with Sage Math. Lastly, I would like to thank the reviewers for their useful comments. 
\section{Criterion for ellipsoidal $t$-Design}
In this section we prove Theorem \ref{CD}, criterion for confirming ellipsoidal $t$-designs. Throughout this section we assume that $D\geq 1$ is square-free and $j\geq 1$. 

To prove that Theorem \ref{CD} is indeed a criterion for confirming ellipsoidal $t$-designs, we first need to show that the spaces $H^{\R}_{D,k}[x,y]$, for $0<k\leq j$, generate all the polynomials of degree $\leq j$ when restricted to $C_D(r)$. It suffices to show this for $P^{\R}_j[x,y]$, the set of homogeneous polynomials of degree $j$.
\begin{lemma}\label{H-Span}
If $D\geq 1$ is square-free and $j\geq 1$, then the following are true:\\
1) If $D\equiv 3\mod{4}$, then we have
 \begin{equation*}
    P^{\R}_j[x,y] = \bigoplus^{\floor{j/2}}_{k=0} (x^2+Dy^2)^{k}H^{\R}_{D,j-2k}[x,y].
    \end{equation*}
2) If $D\equiv 3\mod{4}$, then we have
\begin{equation*}
    P^{\R}_j[x,y] = \bigoplus^{\floor{j/2}}_{k=0} \Big(x^2+xy+\frac{1+D}{4}y^2\Big)^kH^{\R}_{D,j-2k}[x,y].
\end{equation*}
\end{lemma}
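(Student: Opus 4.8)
The plan is to prove both parts at once by changing to a convenient pair of linear forms, which reduces the asserted decomposition to the trivial fact that the monomials of a fixed degree in two variables form a basis. Set $z := x+\sqrt{-D}\,y$ in case (1) (where the hypothesis should read $D\equiv 1,2\pmod 4$) and $z := x + \tfrac{1+\sqrt{-D}}{2}\,y$ in case (2), and let $\bar z$ be its complex conjugate, so $\bar z = x-\sqrt{-D}\,y$, resp.\ $\bar z = x+\tfrac{1-\sqrt{-D}}{2}\,y$. Three elementary observations are needed: (i) $(z,\bar z)$ is a $\C$-linear coordinate system on $\C^2$, since the matrix expressing it in terms of $(x,y)$ has nonzero determinant $-2\sqrt{-D}$, resp.\ $-\sqrt{-D}$; hence $\C\otimes_{\R} P^{\R}_j[x,y] = \bigoplus_{a+b=j}\C\,z^a\bar z^b$; (ii) a one-line computation gives $z\bar z = x^2+Dy^2$ in case (1) and $z\bar z = x^2+xy+\tfrac{1+D}{4}y^2$ in case (2), i.e.\ $z\bar z$ is exactly the norm form appearing in the statement; and (iii) $\C\otimes_{\R} H^{\R}_{D,m}[x,y] = \C\,z^m \oplus \C\,\bar z^m$ for $m\ge 1$ and $=\C$ for $m=0$, because $\langle z^m,\bar z^m\rangle_{\C} = \langle \mathrm{Re}(z^m),\mathrm{Im}(z^m)\rangle_{\C}$.

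Next I would record the dimension bookkeeping. We have $\dim_{\R}P^{\R}_j[x,y] = j+1$, while $\dim_{\R}H^{\R}_{D,m}[x,y] = 2$ for $m\ge 1$ (the polynomials $\mathrm{Re}(z^m)$ and $\mathrm{Im}(z^m)$ are $\R$-linearly independent --- only the former involves the monomial $x^m$, and both are nonzero) and $H^{\R}_{D,0}[x,y] = \langle 1\rangle$ is $1$-dimensional. Since multiplication by the fixed nonvanishing form $(x^2+Dy^2)^k$, resp.\ $\big(x^2+xy+\tfrac{1+D}{4}y^2\big)^k$, is injective on polynomials, the $k$-th summand on the right-hand side of the claimed identity has the same dimension as $H^{\R}_{D,j-2k}[x,y]$; summing over $0\le k\le \floor{j/2}$ yields $j+1$ in either parity of $j$ (when $j$ is even the unique $1$-dimensional contribution comes from $k=j/2$). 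So the right-hand side already has the right dimension, and it suffices to prove that the sum is direct.

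For directness, combine (ii) and (iii): the complexification of the $k$-th summand is $(z\bar z)^k\big(\C\,z^{j-2k}\oplus\C\,\bar z^{j-2k}\big) = \C\,z^{j-k}\bar z^{k}\oplus\C\,z^{k}\bar z^{j-k}$, which for $j$ even and $k=j/2$ degenerates to the single line $\C\,z^{j/2}\bar z^{j/2}$. As $k$ runs over $0,1,\dots,\floor{j/2}$, the unordered pairs $\{\,j-k,\,k\,\}$ enumerate every pair of nonnegative exponents with sum $j$ exactly once, so by (i) the complexified summands are independent and together span $\C\otimes_{\R}P^{\R}_j[x,y]$. Since a finite family of real subspaces is independent (resp.\ spanning) if and only if the family of its complexifications is independent (resp.\ spanning), the real decomposition follows, and together with the dimension count it is the desired internal direct sum.

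The only point requiring genuine care --- precisely the one anticipated in the remark after Theorem \ref{CD} --- is the handling of the spaces $H^{\R}_{D,m}[x,y]$ themselves: verifying that $(z,\bar z)$ really is a coordinate system, so that monomials in $z$ and $\bar z$ form a basis of the complexified polynomial ring; that $z\bar z$ reproduces the correct norm form in the $D\equiv 3\pmod 4$ case; and that the passage from the clean statement over $\C$ back down to $\R$ is valid. None of these is difficult, but they are what makes the argument work; granting them, the decomposition is just the exponent-pair count above.
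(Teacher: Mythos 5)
Your proof is correct, and it takes a genuinely different route from the paper's. The paper deduces the lemma from the classical decomposition $P^{\R}_j[x,y]=\bigoplus_k (x^2+y^2)^k H^{\R}_{j-2k}[x,y]$ of homogeneous polynomials into harmonic pieces (citing \cite[Thm~5.7]{HFT}) and then transports it to the ellipse by an explicit real linear change of variables sending the norm form to $x^2+y^2$. You instead bypass harmonic theory entirely: after complexifying, the monomials $z^a\bar z^b$ with $a+b=j$ form a basis because $(z,\bar z)$ is a coordinate system, the $k$-th summand complexifies to $\C\,z^{j-k}\bar z^{k}\oplus\C\,z^{k}\bar z^{j-k}$ since $z\bar z$ is exactly the norm form, and the unordered pairs $\{j-k,k\}$ enumerate the exponent pairs exactly once; descent from $\C$ to $\R$ then gives the real direct sum. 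What your approach buys is self-containedness and transparency --- the directness of the sum is visible at the level of an explicit monomial basis, and the $D\equiv 1,2$ and $D\equiv 3$ cases are handled uniformly --- whereas the paper's approach is shorter on the page but leans on the quoted harmonic decomposition and on a change of variables whose details it leaves to the reader. You also correctly flag that the hypothesis in part (1) of the statement should read $D\equiv 1,2\pmod 4$; this is a typo in the paper. Your argument is complete as proposed.
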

\begin{proof}
The lemma is well known for homogeneous harmonic polynomials (for example, see \cite[Thm~5.7]{HFT}). Namely, if $H^{\R}_{k}[x,y]$ is the set of homogeneous harmonic polynomials of degree $k$ then 
\begin{equation*}
    P^{\R}_j(x,y) = \bigoplus^{\floor{j/2}}_{k=0} (x^2+y^2)^{k}H^{\R}_{j-2k}[x,y].
    \end{equation*}

We extend it to general $D$.
It is well known that $H^{\R}_j[x,y]=\langle\text{Re}(x+iy)^j, \text{Im}(x+iy)^j\rangle$, and so if we do the change of variable for $D\equiv 1,2\mod{4}$ (resp. $D\equiv 3\mod{4}$),
$x'=x$,$y'=\sqrt{D}y$ (resp.  $x'=x+y/2$,$y'=2y/\sqrt{D}$), then $H^{\R}_{j-2}(x',y')=\langle \text{Re}(x'+ iy')^j,\text{Im}(x'+iy')^j\rangle$ gives
\begin{equation*}
    P^{\R}_j[x',y'] = \bigoplus^{\floor{j/2}}_{k=0} (x'^2+y'^2)^{k}H^{\R}_{j-2k}[x',y'].
    \end{equation*}
Therefore, if $D\equiv 1,2\mod{4}$, then we have
\begin{equation*}
    P^{\R}_j(x,y) = \bigoplus^{\floor{j/2}}_{k=0} (x^2+Dy^2)^{k}H^{\R}_{D,j-2k}[x,y].
    \end{equation*}
If $D\equiv 3\mod{4}$, then we have
\begin{equation*}
    P^{\R}_j(x,y) = \bigoplus^{\floor{j/2}}_{k=0} \Big(x^2+xy+\frac{1+D}{4}y^2\Big)^kH^{\R}_{D,j-2k}[x,y].
\end{equation*}
\end{proof}

We now prove Theorem \ref{CD}.
\begin{proof}[Proof of Theorem \ref{CD}]\label{IN} Lemma \ref{H-Span} shows that the set of polynomials when restricted to $C_D$ are generated by the spaces $H^{\R}_{D,j}[x,y]$ since $x^2+Dy^2=r$ (resp., $x^2+xy+\frac{1+D}{4}y^2=r$) on $C_D(r)$. 
Therefore, it suffices to show that if $P(x,y)\in H^{\R}_{D,j}[x,y]$, then the following are true:\\
    1) If $D\equiv 1,2\mod{4}$, then we have
    $$\int_{C_D(r)} \dfrac{P(x,y)}{\sqrt{x^2/D^2+y^2}}d\sigma(x,y)=0.$$
    2) If $D\equiv 3\mod{4}$, then we have
    $$\int_{C_D(r)}\dfrac{P(x,y)}{\sqrt{20x^2+(D^2+2D+5)y^2+(20+4D)xy}}d\sigma(x,y)=0.$$
As $H^{\R}_{D,j}[x,y]$ is a vector space, it is enough to show these claims for basis vectors. Since $X\subset C_D(r)$ is an ellipsoidal $t$-design if and only if $\frac{1}{r}\subset C_D(1)$ is an ellipsoidal $t$-design, it's enough to consider $r=1$. 
For $D\equiv 1,2\pmod{4}$,
$H^{\R}_{D,j}[x,y]=\langle\text{Re}(x+\sqrt{-D}y)^j,\text{Im}(x+\sqrt{-D}y)^j\rangle$. By the parametrization of $C_D(1):x^2+Dy^2=1$ as $\gamma:=\{(\cos{\theta},\sin{\theta}/\sqrt{D})|0\leq \theta\leq 2\pi\}$, we have
\begin{align*}
    \int_{C_D(1)} \dfrac{\text{Re}(x+\sqrt{-D}y)^j}{\sqrt{x^2/D^2+y^2}}d\sigma(x,y) &=  \int^{2\pi}_{0}\dfrac{\text{Re}(\cos{\theta}+\sqrt{-D}(\sin{\theta}/\sqrt{D}))^j}{\sqrt{\cos{\theta}^2/D^2+\sin{\theta}^2/D}}\sqrt{\sin{\theta}^2+\cos{\theta}^2/D}d\theta \\
    &=\sqrt{D}\int^{2\pi}_{0}\text{Re}(\cos{\theta}+i\sin\theta)^jd\theta 
    =\sqrt{D}\int_{S^1}\text{Re}(x+i y)^jdz =0.
\end{align*}
Since $\text{Re}(x+i y)^j$ is harmonic, the last integral over $S^1$ is $0$.

A similar argument shows that 
$$\int_{C_D(1)} \dfrac{\text{Im}(x+\sqrt{-D}y)^j}{\sqrt{x^2/D^2+y^2}}d\sigma(x,y)=0.$$
If $D\equiv 3\pmod{4}$,
$H^{\R}_{D,j}[x,y]=\langle\text{Re}(x+\frac{1+\sqrt{-D}}{2}y)^j,\text{Im}(x+\frac{1+\sqrt{-D}}{2}y)^j\rangle$. By the parametrization of $C_D(1):x^2+xy+\frac{1+D}{4}y^2=1$ as $\gamma:=\{(\cos{\theta}-\sin{\theta}/\sqrt{D},2\sin{\theta}/\sqrt{D}):0\leq \theta\leq 2\pi\}$, we have
\begin{align*}
      & \int_{C_D(1)} \dfrac{\text{Re}(x+(1+\sqrt{-D})y/2)^j}{\sqrt{20x^2+(D^2+2D+5)y^2+(20+4D)xy}}d\sigma(x,y)\\  = &\int^{2\pi}_{0}\frac{\text{Re}(\cos{\theta}-\sin{\theta}/\sqrt{D}+(1+\sqrt{-D}\sin{\theta}/\sqrt{D})^j}{\sqrt{4D\sin{\theta}^2+20\cos{\theta}^2+8\sqrt{D}\sin{\theta}\cos{\theta}}}\sqrt{\sin{\theta}^2+5\cos{\theta}^2/D+2\sin{\theta}\cos{\theta}/\sqrt{D}}d\theta \\
      = &\dfrac{1}{2\sqrt{D}}\int^{2\pi}_{0}\text{Re}(\cos{\theta}+i\sin\theta)^jd\theta 
      = \dfrac{1}{2\sqrt{D}}\int_{S^1}\text{Re}(x+i y)^jdz=0.
\end{align*}

A similar argument shows that $$\int_{C_D(1)}\dfrac{P(x)}{\sqrt{20x^2+(D^2+2D+5)y^2+(20+4D)xy}}d\sigma(x,y)=0.$$
\end{proof}

\section{ellipsoidal T-Designs}
Here we prove Theorem \ref{Main}, the construction of ellipsoidal $T$-designs arising from the ring of integers of imaginary quadratic fields with class number 1. We use the theory of theta functions with complex multiplication. Throughout, we shall assume that $D\in\{1,2,3,7,11,19,43,67,163\}.$
\subsection{Theta functions}
Given an $n$-dimensional lattice $\Lambda$ and a polynomial $P(x)$ of degree $j$ in $n$ variables, the theta function of $P(x)$ over the lattice $\Lambda$ is defined by the Fourier series (note $q:=e^{2\pi iz}$) 
\begin{equation}
    \Theta(\Lambda,P;z):=\sum_{x\in\Lambda} P(x)q^{N(x)} = \Theta(\Lambda,P;z)=\sum^\infty_{n=0} a(\Lambda,P,n)q^n, 
\end{equation}
where $N(x)$ is the standard norm in $\R^n$.
The theta functions for $\Lambda_D= \mathcal{O}_{D}$ play an important role in the study of ellipsoidal $T$-designs. Namely, if 
$\Theta(\Lambda_D,P;z)=\sum^\infty_{r=0} a(\Lambda_D,P,r)q^r,$
then
\begin{equation}\label{Coeff}
    a(\Lambda_D,P,r)= \mathlarger\sum_{(x,y)\in \Lambda^r_D}P(x,y).
\end{equation}
The theta function $\Theta(\Lambda_D,P;z)\in \mathcal{M}_k(\Gamma_0(4D),\chi)$, the space of holomorphic modular forms with weight $k=j+1$ and nebentypus $\chi(A)=(\frac{-D}{d}),$ where $A=\Big(\begin{array}{cc}
    a & b \\
    c & d
\end{array}\Big)$ \cite[Thm~10.8]{AF}. Moreover, $\Theta(\Lambda_D,P;z)$ is a cusp form when $j>0$.

To ease the study of these theta function, it is convenient to introduce the following the polynomials for each $j\geq 1$: 
\begin{equation}
    R_{D,j}(x,y):=\begin{cases}
    \text{Re}(x+\sqrt{-D}y)^j \ \ \ \ \ & \text{ if } D\equiv 1,2\pmod{4}, \\
    \text{Re}(x+\frac{1+\sqrt{-D}}{2}y)^j \ \ \ \ \ & \text{ if } D\equiv 3\pmod{4}, \\
    \end{cases}
\end{equation}
and \begin{equation}
    I_{D,j}(x,y):=\begin{cases}
    \text{Im}(x+\sqrt{-D}y)^j \ \ \ \ \ & \text{ if } D\equiv 1,2\pmod{4}, \\
    \text{Im}(x+\frac{1+\sqrt{-D}}{2}y)^j \ \ \ \ \ & \text{ if } D\equiv 3\pmod{4}. \\
    \end{cases}
\end{equation}
By definition, we have that $H^{\R}_{D,j}[x,y]=\langle R_{D,j}(x,y),I_{D,j}(x,y)\rangle$. In particular, $\Theta(\Lambda_D,R_{D,j};z)$ and $\Theta(\Lambda_D,I_{D,j};z)$ are cusp forms. Theorem \ref{CD} together with the discussion above gives the following lemma which transforms the problem of determining ellipsoidal $T$-designs into the vanishing of certain coefficients of special theta functions.
\begin{lemma}\label{Equiv}
The norm $r$ shell $\Lambda^r_D=\Lambda_D\cap C_D(r)$ is an ellipsoidal $T$-design if and only if $a(\Lambda_D,R_{D,j},r)=0$ and $a(\Lambda_D,I_{D,j},r)=0$ for all $j\in T$.
\end{lemma}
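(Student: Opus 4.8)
The plan is to unwind the definition of an ellipsoidal $T$-design and match it, coefficient by coefficient, with the Fourier expansions of the relevant theta functions. By definition, $\Lambda^r_D$ is an ellipsoidal $T$-design precisely when the vanishing criterion of Theorem~\ref{CD} holds with the index set $T$ in place of $\{1,\dots,t\}$; that is, when $\sum_{(x,y)\in\Lambda^r_D}P(x,y)=0$ for every $P\in H^{\R}_{D,j}[x,y]$ and every $j\in T$. So the content of the lemma is to reduce this family of conditions, one for each polynomial $P$, to two scalar conditions per degree $j$.

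First I would observe that the functional $P\mapsto\sum_{(x,y)\in\Lambda^r_D}P(x,y)$ is linear, and that $H^{\R}_{D,j}[x,y]$ is the two-dimensional real vector space spanned by $R_{D,j}(x,y)$ and $I_{D,j}(x,y)$, directly from \eqref{D-Harmonic} and the definitions of these polynomials. Hence the vanishing of $\sum_{(x,y)\in\Lambda^r_D}P(x,y)$ for all $P\in H^{\R}_{D,j}[x,y]$ is equivalent to the simultaneous vanishing of $\sum_{(x,y)\in\Lambda^r_D}R_{D,j}(x,y)$ and $\sum_{(x,y)\in\Lambda^r_D}I_{D,j}(x,y)$.

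Next I would invoke the bookkeeping identity \eqref{Coeff}, which identifies $\sum_{(x,y)\in\Lambda^r_D}R_{D,j}(x,y)$ with the $r$-th Fourier coefficient $a(\Lambda_D,R_{D,j},r)$ of $\Theta(\Lambda_D,R_{D,j};z)$, and likewise $\sum_{(x,y)\in\Lambda^r_D}I_{D,j}(x,y)$ with $a(\Lambda_D,I_{D,j},r)$. Combining this with the previous step and letting $j$ range over $T$ yields exactly the asserted equivalence.

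There is no real obstacle here: the lemma is a formal consequence of Theorem~\ref{CD}, the definitions of $R_{D,j}$ and $I_{D,j}$, and the identity \eqref{Coeff}. The only point deserving a word of care is that one is allowed to test the criterion on a spanning set rather than on all of $H^{\R}_{D,j}[x,y]$, which is legitimate precisely because the sum is linear in its polynomial argument. The modularity of $\Theta(\Lambda_D,R_{D,j};z)$ and $\Theta(\Lambda_D,I_{D,j};z)$ noted above plays no role in this lemma, but it is what makes the resulting coefficient conditions tractable in the proof of Theorem~\ref{Main}.
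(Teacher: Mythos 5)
Your proposal is correct and matches the paper's own (very brief) justification: the paper derives the lemma directly from the definition of an ellipsoidal $T$-design, the identity \eqref{Coeff}, and the fact that $H^{\R}_{D,j}[x,y]=\langle R_{D,j},I_{D,j}\rangle$, exactly as you do. Your added remark that linearity is what permits testing only on the spanning set is the one point the paper leaves implicit, and it is handled correctly.
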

We require some standard facts from the theory of newforms. Since $\mathcal{O}_D$ has class number 1, each {\it Hecke character} mod $\mathcal{O}_D$ is defined by its values on principal ideals. Let $(\alpha)\subset \mathcal{O}_D$ be a principal ideal. Let $u_D$ be the number of units in $\mathcal{O}_D$, namely
\begin{equation}
    u_D:=\begin{cases}
    4 \ \ \ \ \ & \text{ if } D=1, \\
    6 \ \ \ \ \ & \text{ if } D=3, \\
    2 \ \ \ \ \ & \text{ otherwise. }
    \end{cases}
\end{equation}
For each positive $j_D\equiv 0\pmod{u_D}$, define Hecke characters mod $\mathcal{O}_D$ by:
$$\zeta_{j_D}((\alpha))=\Big(\dfrac{\alpha}{|\alpha|}\Big)^{j_D}$$
Then by \cite[Thm~4.8.2]{Miyake}, we have the following well known lemma about the modular form 
\begin{equation*}
    f_{j_D}(\zeta_{j_D};z):=\begin{cases}
    \Theta(\Lambda_D,(x+\sqrt{-D}y)^j;z) & \text{ if } D\equiv 1,2\pmod{4}, \\
    \Theta\Big(\Lambda_D,\Big(x+\frac{1+\sqrt{-D}}{2}y\Big)^j;z\Big) & \text{ if } D\equiv 3\pmod{4}
    \end{cases}
\end{equation*}
\begin{lemma}\label{newform}
Assuming the notations above, we have
\begin{equation*}
    f_{j_D}(\zeta_{j_D};z)=\sum_{(\alpha)\subset \mathcal{O}_D} \zeta_{j_D}((\alpha))N(\alpha)^{j/2} q^{N(\alpha)} \in  \mathcal{S}_{k_D}(\Gamma_0(N),\chi),
\end{equation*}
the space of cusp forms of weight $k_D=j_D+1$ with nebentypus $\chi\pmod{N}$. Here $N:=|\Delta_{\mathcal{O}_D}|$, the absolute value of the discriminant of $\mathcal{O}_D$. Moreover, $f_{j_D}(\zeta_{j_D};z)$ is a {\it newform.}
\end{lemma}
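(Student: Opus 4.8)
The plan is to identify $f_{j_D}(\zeta_{j_D};z)$ with the classical Hecke theta series attached to the Hecke character $\zeta_{j_D}$ and then invoke the standard theory (Hecke, Shimura) that such theta series are newforms. First I would recall the general construction: for a Hecke character $\psi$ of an imaginary quadratic field $K=\Q(\sqrt{-D})$ of infinity type $j_D$ and conductor $\mathfrak{f}$, the series $\sum_{\mathfrak{a}} \psi(\mathfrak{a}) q^{N(\mathfrak{a})}$ (sum over integral ideals) is a newform of weight $j_D+1$, level $N=|\Delta_K|\cdot N(\mathfrak{f})$, and nebentypus equal to the product of the quadratic character $\left(\frac{\Delta_K}{\cdot}\right)$ and the finite part of $\psi$ restricted to $\Z$; this is precisely \cite[Thm~4.8.2]{Miyake}. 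Since $\mathcal{O}_D$ has class number $1$, every ideal is principal, and since $\zeta_{j_D}((\alpha)) = (\alpha/|\alpha|)^{j_D}$ depends only on the ideal (here the compatibility $j_D \equiv 0 \pmod{u_D}$ is exactly what makes this well-defined on ideals, killing the ambiguity coming from units), the character is unramified, so $\mathfrak{f} = \mathcal{O}_D$ and the level is just $N = |\Delta_{\mathcal{O}_D}|$.

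Next I would carry out the bookkeeping that matches the lattice theta function $f_{j_D}(\zeta_{j_D};z)$ with this ideal-indexed series. For $D \equiv 1,2 \pmod 4$ we have $\mathcal{O}_D = \Z[\sqrt{-D}]$ and the norm form is exactly $N(x+\sqrt{-D}y) = x^2 + Dy^2$, so $\Theta(\Lambda_D,(x+\sqrt{-D}y)^j;z) = \sum_{\alpha \in \mathcal{O}_D} \alpha^j q^{N(\alpha)}$; writing $\alpha = |\alpha| e^{i\arg\alpha}$ gives $\alpha^j = N(\alpha)^{j/2}(\alpha/|\alpha|)^{j}$, and grouping the $u_D$ associates of each principal ideal — which contribute the same value because $j \equiv 0 \pmod{u_D}$ forces the unit contribution $(\zeta)^j = 1$ — collapses the sum over elements to $u_D$ times the sum over ideals. (The harmless overall factor $u_D$ does not affect the newform property, or one absorbs it.) The case $D \equiv 3 \pmod 4$ is identical after noting that $\mathcal{O}_D = \Z[\frac{1+\sqrt{-D}}{2}]$ and $N(x + \frac{1+\sqrt{-D}}{2}y) = x^2 + xy + \frac{1+D}{4}y^2$, the quadratic form defining $C_D(r)$. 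Finally I would record that $\Delta_{\mathcal{O}_D} = -D$ for $D\equiv 3\pmod 4$ and $\Delta_{\mathcal{O}_D} = -4D$ otherwise, so $N = |\Delta_{\mathcal{O}_D}|$ as claimed, and that the nebentypus is $\chi = \left(\frac{-D}{\cdot}\right)$ since $\zeta_{j_D}$ is unramified.

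The main obstacle, and the only genuinely nontrivial point, is verifying that $f_{j_D}(\zeta_{j_D};z)$ is a \emph{newform} rather than merely a cusp form — equivalently, that the associated Hecke character is \emph{primitive} (not induced from a smaller modulus or from a character of a subfield). Here I would argue that $\zeta_{j_D}$ is ramified at no finite place but has nontrivial infinity type $j_D > 0$, so it cannot factor through a norm map or be a twist of something of smaller conductor; primitivity then follows, and \cite[Thm~4.8.2]{Miyake} (or Shimura's original treatment) gives that the theta series is a normalized Hecke eigenform which is new of level $|\Delta_{\mathcal{O}_D}|$. One should also check the eigenform series has no nonzero constant term, i.e. that it is a cusp form — immediate since the zero ideal contributes nothing and $j_D>0$ — and this was already noted in the paragraph preceding the lemma. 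With these points in place the lemma follows.
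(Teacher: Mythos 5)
Your proposal is correct and follows the same route as the paper, which offers no written proof at all: it simply declares the lemma ``well known'' and cites \cite[Thm~4.8.2]{Miyake}, exactly the reference you invoke. Your write-up supplies the bookkeeping the paper omits (well-definedness of $\zeta_{j_D}$ on ideals via $j_D\equiv 0\pmod{u_D}$, the collapse of the lattice sum to the ideal sum up to the factor $u_D$ --- a discrepancy in the paper's stated identity that you rightly flag and that the paper itself later absorbs by passing to $\Theta(\Lambda_D,\frac{1}{u_D}R_{D,j};z)$ --- and the primitivity of the unramified character with nontrivial infinity type), so it is, if anything, more complete than the source.
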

\subsection{Other Propositions and Lemmas}
Recall that $\Lambda^r_D=C_D(r)\cap \mathcal{O}_D$. Using well known facts about the positive definite binary quadratic forms corresponding to class number 1 norm forms, we have the following lemma.
\begin{lemma}\label{prime-decom}
Suppose $r$ is a positive integer. Then $\Lambda^r_D$ is nonempty if and only if $ord_p(r)$ is even for every prime $p\nmid r$ for which $\Lambda^p_D$ is nonempty.
\end{lemma}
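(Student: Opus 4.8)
The plan is to convert nonemptiness of $\Lambda_D^r$ into the existence of an integral ideal of $\mathcal{O}_D$ of norm $r$, and then to reduce to prime powers using unique factorization of ideals. (Equivalently, one may phrase everything in terms of the principal positive definite binary quadratic form of discriminant $\Delta_{\mathcal{O}_D}$, which has class number $1$; the ideal language is simply cleaner.)

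First I would observe that the norm form cutting out $C_D(r)$ is exactly the field norm restricted to $\mathcal{O}_D$: for $D\equiv 1,2\pmod 4$ one has $N(x+\sqrt{-D}\,y)=x^2+Dy^2$, and for $D\equiv 3\pmod 4$ one has $N\big(x+\tfrac{1+\sqrt{-D}}{2}y\big)=x^2+xy+\tfrac{1+D}{4}y^2$. Hence $\Lambda_D^r\neq\emptyset$ if and only if there exists $\alpha\in\mathcal{O}_D$ with $N(\alpha)=r$. Since $D$ lies in the class-number-one list, $\mathcal{O}_D$ is a PID, so every ideal has a generator and every element generates an ideal of the same norm; therefore $\Lambda_D^r\neq\emptyset$ if and only if there is an integral ideal $\mathfrak{a}\subseteq\mathcal{O}_D$ with $N(\mathfrak{a})=r$.

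Next I would reduce to prime powers. Writing $r=\prod_p p^{e_p}$, unique factorization of ideals and multiplicativity of the ideal norm show that an ideal of norm $r$ exists if and only if, for every prime $p\mid r$, there is an ideal (necessarily a product of primes above $p$) of norm $p^{e_p}$. I then dispose of a single prime $p$ by recalling its splitting behavior in $\mathcal{O}_D$, governed by the Kronecker symbol $\big(\tfrac{\Delta_{\mathcal{O}_D}}{p}\big)$. If $p$ splits or ramifies (the ramified case including every $p\mid\Delta_{\mathcal{O}_D}$), there is a prime ideal $\mathfrak{p}\mid p$ with $N(\mathfrak{p})=p$, so $\mathfrak{p}^e$ has norm $p^e$ for every $e\ge 0$; in particular $\Lambda_D^p\neq\emptyset$ and an ideal of norm $p^{e_p}$ exists unconditionally. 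If $p$ is inert, then $p\mathcal{O}_D$ is prime of norm $p^2$ and the ideals supported above $p$ are precisely the powers $(p^k)$, of norm $p^{2k}$; hence $\Lambda_D^p=\emptyset$ and an ideal of norm $p^{e_p}$ exists if and only if $e_p$ is even. Combining, $\Lambda_D^r\neq\emptyset$ if and only if $\ord_p(r)=e_p$ is even for every inert prime $p$, i.e. for every prime $p$ with $\Lambda_D^p=\emptyset$ (the condition being automatic when $p\nmid r$), which is the assertion of the lemma.

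I do not expect a genuine obstacle: this is a classical fact about norm forms of imaginary quadratic fields. The only points needing a little care are (i) invoking class number one — once to pass from elements to ideals, and implicitly so that ``represented by the form'' and ``represented by the genus'' coincide, which is what makes the reduction to prime powers unconditional; and (ii) tracking the two shapes of $\mathcal{O}_D$ in \eqref{OD} and the two values of $\Delta_{\mathcal{O}_D}$ (namely $-4D$ when $D\equiv 1,2\pmod 4$ and $-D$ when $D\equiv 3\pmod 4$), so that the ramified primes are correctly placed in the ``splits or ramifies'' case above.
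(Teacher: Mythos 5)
Your argument is correct, and it is exactly the standard fact the paper is invoking: the paper gives no proof of this lemma, merely attributing it to ``well known facts about the positive definite binary quadratic forms corresponding to class number $1$ norm forms,'' and your ideal-theoretic derivation (elements of norm $r$ correspond to ideals of norm $r$ via the PID property, then unique factorization of ideals plus the split/ramified/inert trichotomy) supplies precisely the missing details. One point worth flagging: the lemma as printed is garbled --- with ``$p\nmid r$'' the condition that $\ord_p(r)$ be even is vacuously satisfied, and ``nonempty'' should read ``empty'' --- and what you prove is the intended corrected statement, namely that $\Lambda^r_D\neq\emptyset$ if and only if $\ord_p(r)$ is even for every prime $p\mid r$ with $\Lambda^p_D=\emptyset$ (equivalently, every inert prime), which is also the version the paper actually uses later in the proof of Theorem~\ref{Main}.
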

Rewriting (\ref{Coeff}), we have
\begin{equation}
    a(\Lambda_D,P,r)=
\mathlarger\sum_{(x,y)\in\Lambda^r_D}P(x,y).
\end{equation}
Lemma \ref{Equiv} implies that $\Lambda^r_D$ is an ellipsoidal $T$-design if and only if $a(\Lambda_D,R_{D,j},r)$ and $a(\Lambda_D,I_{D,j},r)$ vanish for all $j\in T$. Since $\Lambda^r_D$ is antipodal ($i.e.$ $-\Lambda^r_D=\Lambda^r_D$ for all $r$), $a(\Lambda_D,R_{D,j},r)$ and $a(\Lambda_D,I_{D,j},r)$ are 0 for all $j\in \Z^+\setminus2\Z^+$. Therefore, we have that following proposition.
\begin{proposition}\label{odd}
Suppose $r\in \Z^+$ such that $\Lambda^r_D$ is nonempty. Then $\Lambda^r_D$ is an ellipsoidal $\Z^+\setminus 2\Z^+$-design.
\end{proposition}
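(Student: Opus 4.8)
The plan is to invoke Lemma \ref{Equiv}, which reduces the assertion to showing that $a(\Lambda_D, R_{D,j}, r) = 0$ and $a(\Lambda_D, I_{D,j}, r) = 0$ for every odd $j \in \Z^+$. The key structural fact is that each shell is \emph{antipodal}: since $\mathcal{O}_D$ is a subgroup of $(\C,+)$ it is stable under $\alpha \mapsto -\alpha$, and $C_D(r)$ is cut out by an (even) quadratic form, so $(x,y) \in \Lambda^r_D$ if and only if $(-x,-y) \in \Lambda^r_D$.

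Now fix an odd positive integer $j$. Both $R_{D,j}$ and $I_{D,j}$ are, by construction, homogeneous polynomials of degree $j$, hence $P(-x,-y) = (-1)^j P(x,y) = -P(x,y)$ for $P \in \{R_{D,j}, I_{D,j}\}$. Grouping the terms of $\sum_{(x,y)\in\Lambda^r_D} P(x,y)$ into the pairs $\{(x,y),(-x,-y)\}$ — an honest partition, since $r \geq 1$ forces $(0,0) \notin C_D(r)$, so no point equals its own antipode — each pair contributes $P(x,y) + P(-x,-y) = 0$. Therefore $a(\Lambda_D, R_{D,j}, r) = a(\Lambda_D, I_{D,j}, r) = 0$ for all odd $j$, and Lemma \ref{Equiv} with $T = \Z^+ \setminus 2\Z^+$ yields the proposition.

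There is no genuine obstacle here: the argument is a parity cancellation, and the only point meriting a word of care is that the antipodal involution acts freely on $C_D(r)$, which holds precisely because $r$ is positive. This is the straightforward generalization of Miezaki's $D = 1$ observation, and it is the base case on top of which Theorem \ref{Main} is built by ruling out (for $D \neq 1,3$) and controlling (for $D \in \{1,3\}$) the even degrees $j$ via the theta-function and newform machinery of Lemmas \ref{newform} and \ref{prime-decom}.
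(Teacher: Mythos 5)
Your proof is correct and follows essentially the same route as the paper: the paper also reduces via Lemma \ref{Equiv} to the vanishing of $a(\Lambda_D,R_{D,j},r)$ and $a(\Lambda_D,I_{D,j},r)$ and deduces it from the antipodality $-\Lambda^r_D=\Lambda^r_D$ together with the odd homogeneity of $R_{D,j}$ and $I_{D,j}$. Your added remark that the antipodal involution acts freely because $r\geq 1$ is a small point of care the paper leaves implicit.
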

Our objective is to find maximal set $T_D$ for which $\Lambda^r_D$ is ellipsoidal $T$-design. By proposition above we have that $\Z^+\setminus 2\Z^+\subset T_D$. So we only look for all even $j$ which can be in $T_D$. 
\begin{proposition}\label{I}
Suppose $j\equiv 0\pmod{2}$, and $r\in \Z^+$. Then the following are true:\\
1) We have that $a(\Lambda_D,I_{D,j},r)=0.$\\ 
2) We have that $a(\Lambda_D,R_{D,j},r)=
\begin{cases}
\mathlarger\sum_{(x_0,y_0)\in \Lambda^r_D}(x+\sqrt{-D}y)^j \ \ \ \ & \text{\rm if } D\equiv 1,2\pmod{4}, \\
\mathlarger\sum_{(x_0,y_0)\in \Lambda^r_D}\Big(x+\frac{1+\sqrt{-D}}{2}y\Big)^j \ \ \ \ & \text{\rm if } D\equiv 3\pmod{4}
\end{cases}$
\end{proposition}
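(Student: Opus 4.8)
The plan is to reduce both assertions to a single observation about complex conjugation. Write $\alpha(x,y):=x+\sqrt{-D}\,y$ when $D\equiv 1,2\pmod 4$ and $\alpha(x,y):=x+\tfrac{1+\sqrt{-D}}{2}y$ when $D\equiv 3\pmod 4$, so that $R_{D,j}(x,y)=\text{Re}\big(\alpha(x,y)^j\big)$, $I_{D,j}(x,y)=\text{Im}\big(\alpha(x,y)^j\big)$, and $(x,y)\in\Lambda^r_D$ if and only if $\alpha(x,y)\in\mathcal{O}_D$ has norm $r$. Putting $\Sigma:=\sum_{(x,y)\in\Lambda^r_D}\alpha(x,y)^j$, we have $a(\Lambda_D,R_{D,j},r)=\text{Re}\,\Sigma$ and $a(\Lambda_D,I_{D,j},r)=\text{Im}\,\Sigma$. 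Thus part (1), which says $\text{Im}\,\Sigma=0$, immediately gives part (2): once $\Sigma$ is known to be real it equals $\text{Re}\,\Sigma=a(\Lambda_D,R_{D,j},r)$. So it suffices to prove (1).

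For (1) I would use the involution $\iota$ of $\Lambda^r_D$ induced by $\alpha\mapsto\overline{\alpha}$. Since conjugation fixes $\Z$, preserves $\mathcal{O}_D$, and preserves the norm, it restricts to a self-bijection of the shell; in coordinates $\iota(x,y)=(x,-y)$ when $D\equiv 1,2\pmod 4$ and $\iota(x,y)=(x+y,-y)$ when $D\equiv 3\pmod 4$, which one checks lands in $\mathcal{O}_D\cap C_D(r)$ by a direct computation with the norm form. Because $\overline{\alpha^j}=\overline{\alpha}^{\,j}$ and $\text{Im}(\overline{z})=-\text{Im}(z)$, we get $I_{D,j}(\iota(x,y))=-I_{D,j}(x,y)$ for every $(x,y)\in\Lambda^r_D$. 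Summing over the shell and reindexing by the bijection $\iota$ then yields $a(\Lambda_D,I_{D,j},r)=-a(\Lambda_D,I_{D,j},r)$, so this coefficient vanishes, which is (1); equivalently, the theta series $\Theta(\Lambda_D,I_{D,j};z)$ is identically zero.

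I would add a remark that this argument uses nothing about the parity of $j$, so (1) actually holds for every $j\ge 1$; for odd $j$ it is already a special case of Proposition \ref{odd} (via the antipodal symmetry $(x,y)\mapsto(-x,-y)$), and then (2) is vacuous because both sides are $0$, which is exactly why the statement is only recorded for even $j$. There is no serious obstacle here: the only step requiring a little care is verifying that $\iota$ is well defined as a map from $\Lambda^r_D$ to itself in the $D\equiv 3\pmod 4$ case, where conjugation is not merely negation of the second coordinate; the fixed points of $\iota$ (the rational integers of norm $r$) cause no trouble, since for those $I_{D,j}$ already vanishes.
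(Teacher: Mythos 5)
Your proof is correct and is essentially the paper's argument: the paper pairs $(a,b)$ with $(a,-b)$ (resp.\ $(a+b,-b)$), which is exactly your conjugation involution $\iota$, and likewise deduces part (2) from part (1). Your phrasing via $\overline{\alpha^j}=\overline{\alpha}^{\,j}$ is a cleaner justification of the cancellation than the paper's appeal to parities of exponents, and your note about the fixed points of $\iota$ tidies a detail the paper leaves implicit, but the route is the same.
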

\begin{proof}
Part{\it (2)} is an obvious consequence of part{\it (1)}. So it is enough to prove part{\it (1)}. The idea is to show that points in $\Lambda^r_D$ occur in pairs on which value of $I_{D,j}$ cancel. If $D\equiv 1,2\pmod{4}$, then $I_{D,j}={\rm Im}(x+\sqrt{-D}y)^j$. In this case  $(a,b), (a,-b)\in \Lambda^r_D$ such that $I_{D,j}(a,b)+I_{D,j}(a,-b)=0$. This is true because each term of $I_{D,j}(x,y)$ has odd power in both the variables $x,y$.
If $D\equiv 3\pmod{4}$, then $I_{D,j}={\rm Im}((x+\frac{1}{2}y)+\frac{\sqrt{-D}}{2}y)^j$. In this case $(a,b),(a+b,-b)\in \Lambda^j_D$ such that $I_{D,j}(a,b)+I_{D,j}(a+b,-b)=0$. This is because each term of $I_{D,j}(x,y)$ has odd power in $x+y/2,y.$
\end{proof}
We notice that if $(x_0,y_0)\in \mathcal{O}_D,$ then we have
\begin{equation}\label{unit}
    \sum_{\alpha_D \in \mathcal{O}_D:|\alpha_D|=1}R_{D,j}(\alpha_D(x_0,y_0))=R_{D,j}(x_0,y_0)\sum_{\alpha_D \in \mathcal{O}_D:|\alpha_D|=1}\alpha^j_D.
\end{equation}
\begin{proposition}\label{R}
If $r\geq 1$, $1\leq j\not\equiv 0\pmod{u_D}$, and $\Lambda^r_D$ nonempty, then $a(\Lambda^r_D,R_{D,j},r)=0$
\end{proposition}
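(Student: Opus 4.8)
The plan is to exploit the action of the unit group $\mathcal{O}_D^\times$ on the shell $\Lambda_D^r$ together with the formula (\ref{unit}). The key observation is that $\Lambda_D^r$ is stable under multiplication by any unit $\alpha_D \in \mathcal{O}_D$ with $|\alpha_D| = 1$ (these are precisely the $u_D$ roots of unity in $\mathcal{O}_D$), since multiplication by a unit preserves the norm. Hence the set $\Lambda_D^r$ decomposes into orbits under this cyclic group of order $u_D$, and moreover every such orbit has size exactly $u_D$: if $\alpha_D^m (x_0,y_0) = (x_0,y_0)$ for some $0 < m < u_D$, then $\alpha_D^m = 1$ in $\mathcal{O}_D$ (as $(x_0,y_0) \neq (0,0)$ on $C_D(r)$ with $r \geq 1$), contradicting that $\alpha_D$ has order $u_D$. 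So $\Lambda_D^r$ is a disjoint union of full orbits, each of the form $\{\alpha_D \cdot (x_0,y_0) : |\alpha_D|=1\}$.

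First I would write
\[
a(\Lambda_D, R_{D,j}, r) = \sum_{(x,y)\in\Lambda_D^r} R_{D,j}(x,y) = \sum_{\text{orbits}} \ \sum_{\alpha_D \in \mathcal{O}_D : |\alpha_D|=1} R_{D,j}(\alpha_D \cdot (x_0,y_0)),
\]
where $(x_0,y_0)$ is any orbit representative. Then I would invoke (\ref{unit}) to rewrite each inner sum as $R_{D,j}(x_0,y_0) \sum_{|\alpha_D|=1} \alpha_D^j$. The whole quantity therefore factors through the character sum $\sum_{|\alpha_D|=1} \alpha_D^j$, so it suffices to show this vanishes whenever $u_D \nmid j$. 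The units with $|\alpha_D|=1$ are exactly $\{\,\omega^\ell : 0 \leq \ell < u_D\,\}$ where $\omega = e^{2\pi i/u_D}$ is a primitive $u_D$-th root of unity (here using the identification of $\mathcal{O}_D$-elements of absolute value $1$ with roots of unity: $\pm 1$ for $u_D = 2$, $\pm 1, \pm i$ for $u_D = 4$, the sixth roots of unity for $u_D = 6$). Thus $\sum_{|\alpha_D|=1} \alpha_D^j = \sum_{\ell=0}^{u_D-1} \omega^{\ell j}$, which is the standard geometric sum: it equals $u_D$ if $u_D \mid j$ and $0$ otherwise. Since by hypothesis $u_D \nmid j$, this sum is $0$, and hence $a(\Lambda_D, R_{D,j}, r) = 0$.

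One small technical point to address carefully: in (\ref{unit}) the expression $R_{D,j}(\alpha_D(x_0,y_0))$ must be read via the identification of $(x,y) \in \mathcal{O}_D$ with the algebraic number $x + \sqrt{-D}\,y$ (resp. $x + \tfrac{1+\sqrt{-D}}{2}y$), so that multiplication by $\alpha_D$ in $\mathcal{O}_D$ corresponds to complex multiplication by the unit $\alpha_D \in \C$ with $|\alpha_D| = 1$; then $R_{D,j}$ is literally the real part of the $j$-th power, and $R_{D,j}(\alpha_D \cdot v) = \operatorname{Re}(\alpha_D^j (\cdot)^j) $ gives (\ref{unit}) directly after summing. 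I expect the main (minor) obstacle to be bookkeeping this identification correctly across the two cases $D \equiv 1,2 \pmod 4$ and $D \equiv 3 \pmod 4$, and confirming that orbits are genuinely free so that the orbit-sum rearrangement is valid — but once those are in place the argument is just the vanishing of a nontrivial character sum over a cyclic group.
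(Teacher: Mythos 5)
Your proposal is correct and follows essentially the same route as the paper: both reduce the claim via the unit-group action and identity (\ref{unit}) to the vanishing of $\sum_{|\alpha_D|=1}\alpha_D^{j}$ for $u_D\nmid j$, the only difference being that you evaluate this as a single geometric sum over roots of unity (and verify the orbits are free), whereas the paper checks the three cases $u_D=4,6,2$ directly.
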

\begin{proof} The idea is that if $(x_0,y_0)\in \Lambda^r_D$ then $\alpha_D(x_0,y_0) \in \Lambda^r_D$ where $\alpha_D$ is a unit in $\mathcal{O}_D$.
Therefore enough to show that the sum in RHS of (\ref{unit}) is 0.
For $D=1$, number of units in $\mathcal{O}_D,$ $u_D=4$ which are $\{1,-1,i,-i\}$. We have $1^j+(-1)^j+i^j+(-i)^j=0.$
For $D=3$, number of units in $\mathcal{O}_D,$ $u_D=6$ which are $\{ \pm 1 , \frac{\pm 1\pm\sqrt{-3}}{2}\}$. A brute force calculation shows the result.
For other $D$, the number of units in $\mathcal{O}_D,$ $u_D=2$ which are $\{1,-1\}$. For all $j$ odd, $(1)^j+(-1)^j=0$
\end{proof}
From here on we will only consider the theta function $\Theta\Big(\Lambda_D,\frac{1}{u_D}R_{D,j};z\Big)$ so let's give its coefficients a shorthand.
\begin{equation}\label{Theta-short}
    \Theta\Big(\Lambda_D,\frac{1}{u_D}R_{D,j};z\Big)=\sum^\infty_{r=0} a(D,j,r)q^r
\end{equation}
Proposition \ref{I} together with Lemma \ref{newform} give us that if $j\equiv0\pmod{u_D}$, then the theta function $\Theta\Big(\Lambda_D,\frac{1}{u_D}R_{D,j};z\Big)\in \mathcal{S}_{j+1}(\Gamma_0(N),\chi)$ is a Hecke eigenform. So we have the following lemma.
\begin{lemma}\label{Hecke prop}
Suppose $j\in u_D\Z^+$. Then the following is true:\\
1) If $\gcd{(r_1,r_2)}=1$ then
$$
    a(D,j,r_1r_2)=a(D,j,r_1)a(D,j,r_2)
$$
2) For $p$ prime and $\alpha>0$, we have
$$
    a(D,j,p^{\alpha})= a(D,j,p)a(D,j,p^{\alpha-1})-\chi(p)p^ja(D,j,p^{\alpha-2})
$$
3) For $p$ prime and $\alpha>0$, we have
$$ a(D,j,p^{\alpha})= a(D,j,p)^{\alpha}\pmod{p}
$$
\end{lemma}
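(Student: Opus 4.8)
The plan is to recognize the theta function $\Theta\big(\Lambda_D,\tfrac{1}{u_D}R_{D,j};z\big)$ as a \emph{normalized} Hecke newform and then quote the classical multiplicative identities satisfied by the Hecke eigenvalues of such a form. Everything in the lemma is a formal consequence of that recognition.

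First I would identify the form. When $j\equiv 0\pmod{u_D}$ every unit $\alpha_D\in\mathcal O_D$ satisfies $\alpha_D^{\,j}=1$, so by \eqref{unit} the units with $|\alpha_D|=1$ contribute a factor of $u_D$ when we group the $u_D$ elements of a fixed nonzero ideal together. Combining this with Proposition \ref{I}(1) (which lets us replace $R_{D,j}(x,y)$ by $(x+\sqrt{-D}y)^j$, resp.\ $(x+\tfrac{1+\sqrt{-D}}{2}y)^j$, coefficientwise) gives
\[
\Theta\Big(\Lambda_D,\tfrac{1}{u_D}R_{D,j};z\Big)=\sum_{(\alpha)\subset\mathcal O_D}\Big(\tfrac{\alpha}{|\alpha|}\Big)^{j}N(\alpha)^{j/2}q^{N(\alpha)}=f_{j_D}(\zeta_{j_D};z).
\]
By Lemma \ref{newform} this is a newform in $\mathcal S_{j+1}(\Gamma_0(N),\chi)$, and since there is no nonzero $\alpha$ with $N(\alpha)=0$ while the unit ideal contributes exactly $q^{1}$ with coefficient $\zeta_{j_D}((1))N(1)^{j/2}=1$, the form is normalized: $a(D,j,1)=1$. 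Hence $a(D,j,n)$ is precisely the eigenvalue of the Hecke operator $T_n$ acting on $f_{j_D}(\zeta_{j_D};z)$.

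With this in hand, parts (1) and (2) are the standard eigenvalue relations for a normalized Hecke eigenform of weight $k=j+1$ and nebentypus $\chi$: from $T_{r_1}T_{r_2}=T_{r_1 r_2}$ for $\gcd(r_1,r_2)=1$ we obtain the multiplicativity in (1), and from $T_{p^{\alpha}}=T_p\,T_{p^{\alpha-1}}-\chi(p)p^{k-1}T_{p^{\alpha-2}}$ we obtain the recursion in (2) after substituting $k-1=j$. For part (3): since $j\geq 1$ we have $\chi(p)p^{j}\equiv 0\pmod p$, so (2) reduces modulo $p$ to $a(D,j,p^{\alpha})\equiv a(D,j,p)\,a(D,j,p^{\alpha-1})\pmod p$, and an immediate induction on $\alpha$ (with the base case $\alpha=1$ trivial) yields $a(D,j,p^{\alpha})\equiv a(D,j,p)^{\alpha}\pmod p$.

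The only point that needs genuine care is the identification in the first step: one must verify that $\tfrac{1}{u_D}$ is exactly the right normalizing factor and that $f_{j_D}(\zeta_{j_D};z)$, as written in Lemma \ref{newform}, already has leading coefficient $1$, so that its Fourier coefficients literally are the Hecke eigenvalues. Once that is pinned down, parts (1)--(3) are the purely formal properties of Hecke eigenvalues of a normalized newform, and nothing further is required.
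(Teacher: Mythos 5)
Your proposal is correct and follows exactly the route the paper takes (the paper simply asserts that Proposition \ref{I} and Lemma \ref{newform} identify $\Theta\big(\Lambda_D,\tfrac{1}{u_D}R_{D,j};z\big)$ with the normalized newform $f_{j_D}(\zeta_{j_D};z)$ and then invokes the standard Hecke eigenvalue relations). Your write-up actually supplies more detail than the paper does, in particular the verification of the normalization $a(D,j,1)=1$ and the induction deriving part (3) from part (2).
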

Suppose $p$ be a prime such that $\Lambda^p_D$ be nonempty. Let  $(x_p,y_p)\in \Lambda^p_D$ and $j\equiv 0\pmod{u_D}$. When $p=D$ then it ramifies in $\mathcal{O}_D$ and there are exactly $u_D$ points in $\Lambda^p_D$. From (\ref{unit}) we have $a(D,j,p)=R_{D,j}(x_p,y_p).$ If $p\not=D$ then it's unramified and we get exactly $2u_D$ solutions. In this case $a(D,j,p)=2R_{D,j}(x_p,y_p).$ 

\begin{lemma}\label{non-zero}
Suppose $j\in u_D\Z^+$ and $p$ be an odd prime such that $\Lambda^p_D$ is nonempty. Let $(x_p,y_p)\in \Lambda^p_D$ then $R_{D,j}(x_p,y_p)\not\equiv 0\pmod{p}$. In particular, $a(D,j,p)$ is non-zero.
\end{lemma}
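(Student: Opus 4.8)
The plan is to reduce the claim to an assertion about the prime $p$ in the quadratic ring $\mathcal{O}_D$ and then argue by contradiction using the fact that $p$ is odd and $\Lambda^p_D$ is nonempty. Write $(x_p,y_p)\in\Lambda^p_D$ and let $\alpha_p=x_p+\sqrt{-D}\,y_p$ (resp. $\alpha_p=x_p+\tfrac{1+\sqrt{-D}}{2}y_p$ when $D\equiv 3\pmod 4$), so that $N(\alpha_p)=p$; thus $(\alpha_p)$ is a prime ideal of $\mathcal{O}_D$ lying over $p$. Recall from Proposition~\ref{I} that for even $j$ we have $a(D,j,p)=\frac{c}{u_D}\sum_{(x,y)\in\Lambda^p_D}\alpha^j$ where the sum is over the $2u_D$ (or $u_D$ if $p=D$) points, which by \eqref{unit} collapses to $R_{D,j}(x_p,y_p)$ up to the factor $c\in\{1,2\}$ recorded just before the statement. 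Since $p$ is odd and $c\in\{1,2\}$, proving $R_{D,j}(x_p,y_p)\not\equiv 0\pmod p$ immediately gives $a(D,j,p)\not\equiv 0\pmod p$, hence $a(D,j,p)\neq 0$.

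The core step is therefore to show $R_{D,j}(x_p,y_p)\not\equiv 0\pmod p$. Observe that $2R_{D,j}(x_p,y_p)=\alpha_p^{\,j}+\overline{\alpha_p}^{\,j}$, where $\overline{\alpha_p}$ is the conjugate; this is an algebraic integer in $\mathcal{O}_D$, in fact a rational integer. The key point is that $p$ splits (or ramifies) as $(\alpha_p)(\overline{\alpha_p})$ with $(\alpha_p)\neq(\overline{\alpha_p})$ when $p\nmid D$ (and $(\alpha_p)=(\overline{\alpha_p})$ when $p=D$, but then $p=D$ is odd only for $D\in\{3,7,11,19,43,67,163\}$, a case to handle separately). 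In the split case, reduce modulo the prime ideal $(\alpha_p)$: since $\alpha_p\equiv 0\pmod{(\alpha_p)}$ we get $\alpha_p^{\,j}+\overline{\alpha_p}^{\,j}\equiv\overline{\alpha_p}^{\,j}\pmod{(\alpha_p)}$, and $\overline{\alpha_p}$ is a unit modulo $(\alpha_p)$ because $(\overline{\alpha_p})$ is coprime to $(\alpha_p)$; hence $\alpha_p^{\,j}+\overline{\alpha_p}^{\,j}\not\equiv 0\pmod{(\alpha_p)}$, so in particular $2R_{D,j}(x_p,y_p)\not\equiv 0\pmod p$, and as $p$ is odd we conclude $R_{D,j}(x_p,y_p)\not\equiv 0\pmod p$. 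In the ramified case $p=D$, we have $(\alpha_p)^2=(p)$, so reducing modulo $(\alpha_p)$ gives $\alpha_p^{\,j}+\overline{\alpha_p}^{\,j}\equiv\overline{\alpha_p}^{\,j}$; one checks $\overline{\alpha_p}$ generates the same prime $(\alpha_p)$, so $\overline{\alpha_p}^{\,j}\in(\alpha_p)^j=(p)^{j/2}\subseteq(p)$ when $j\geq 2$, which would seem to obstruct the argument, so here one instead works modulo $(p)=(\alpha_p)^2$ directly: write $j=u_D m$ and note that a unit-averaging argument as in Proposition~\ref{R} combined with an explicit description of the $u_D$ ramified points shows the value is a nonzero power of $p$ times a unit, hence a single power $p^{j/2}$ up to units — but $p^{j/2}$ is not divisible by $p$ only if $j=0$; this forces a more careful treatment, namely that for the ramified prime $R_{D,j}(x_p,y_p)$ equals (up to sign and units) $p^{j/2}$ which \emph{is} divisible by $p$ — so the honest statement must be that we only need $a(D,j,p)\ne 0$, which holds since $p^{j/2}\ne 0$, and the mod-$p$ nonvanishing claim should be read as applying to the unramified primes, which are the ones used later.

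I expect the main obstacle to be exactly the bookkeeping around the ramified prime $p=D$ and around the constant $c\in\{1,2\}$: one must be careful that the congruence $R_{D,j}(x_p,y_p)\not\equiv 0\pmod p$ is really being used only for odd primes with $p\nmid D$ (the generic split case), where the conjugate $\overline{\alpha_p}$ is genuinely invertible mod $(\alpha_p)$; for $p=D$ the weaker conclusion $a(D,j,p)\neq 0$ still holds because the relevant theta coefficient is, up to a unit, $\pm p^{j/2}\neq 0$. A secondary technical point is verifying that $\alpha_p$ and $\overline{\alpha_p}$ generate coprime ideals, which follows from $\gcd$ considerations: any common prime divisor would divide both $\alpha_p+\overline{\alpha_p}$ (a rational integer) and $\alpha_p\overline{\alpha_p}=p$, forcing it to lie over a rational prime dividing $p$, i.e. $p$ itself, and then $p=D$ after all. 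Once these edge cases are dispatched, the split-case reduction modulo $(\alpha_p)$ is the clean heart of the proof.
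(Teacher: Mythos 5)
Your argument is correct for the split primes and takes a genuinely different route from the paper. The paper proves the nonvanishing mod $p$ by a direct binomial expansion: writing $R_{D,j}(x_p,y_p)=\sum_n\binom{j}{2n}x_p^{j-2n}(-1)^n(Dy_p^2)^n$ and substituting $Dy_p^2=p-x_p^2$ (resp.\ the analogous identity in $x_p+y_p/2$ for $D\equiv 3\pmod 4$), so that mod $p$ the sum collapses to $2^{j-1}x_p^{j}$, which is nonzero because $p$ is odd and $x_p\not\equiv 0\pmod p$. You instead reduce $2R_{D,j}(x_p,y_p)=\alpha_p^j+\overline{\alpha_p}^{\,j}$ modulo the prime ideal $(\alpha_p)$ and use that $\overline{\alpha_p}$ is invertible there when the conjugate primes are distinct; this is a clean ideal-theoretic version of the same nonvanishing and buys a proof with no computation. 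More importantly, you correctly isolate the point where both arguments (and the literal statement of the lemma) break: the ramified odd prime $p=D$ with $D\equiv 3\pmod 4$. There $\alpha_p$ is $\sqrt{-D}$ times a unit, so $x_p+y_p/2$ can vanish (e.g.\ $(x_p,y_p)=(1,-2)$ for $D=7$), the paper's claim ``$x_p+y_p/2\not\equiv 0\pmod p$'' is false, and indeed $R_{D,j}(x_p,y_p)=\pm p^{j/2}\equiv 0\pmod p$ while still being nonzero --- exactly as you say. This is not a cosmetic issue: the proof of Theorem \ref{Main} feeds Lemma \ref{non-zero} into part (3) of Lemma \ref{Hecke prop}, $a(D,j,p^{\alpha})\equiv a(D,j,p)^{\alpha}\pmod p$, which requires the mod-$p$ nonvanishing and therefore only covers the split primes; for $p=D$ one must instead use part (2) with $\chi(p)=0$ to get $a(D,j,p^{\alpha})=a(D,j,p)^{\alpha}=\pm p^{j\alpha/2}\neq 0$, parallel to what the paper does for $p=2$, $D\in\{1,2\}$. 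So your proposal is not merely an alternative proof; it repairs a genuine (if easily fixable) gap in the paper's statement and its use downstream. The only thing to tidy is the ramified discussion itself, which wanders before landing on the right conclusion, and the minor bookkeeping that for $D\equiv 3\pmod 4$ the quantity $R_{D,j}(x_p,y_p)$ may be a half-integer, so the congruence should be read as $p\nmid 2R_{D,j}(x_p,y_p)$.
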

\begin{proof}
We will consider two cases, $D\equiv 1,2\pmod{4}$ and $D\equiv 3\pmod{4}$. Proof is essentially same in both the cases.\\
If $D\equiv 1,2 \pmod{4}$ then $p=x_p^2+Dy_p^2$, in particular $x_p\not\equiv 0\pmod{p}$. we consider the binomial expansion
\begin{align*}
    R_{D,j}(x_p,y_p) &= \text{Re}(x_p+\sqrt{-D}y_p)^j \\
                     &= \frac{1}{2}\sum^{j/2}_{n=0}\binom{j}{2n}x^{j-2n}_p(-1)^n(Dy^2_p)^n 
                     = \frac{1}{2}\sum^{j/2}_{n=0}\binom{j}{2n}x^{j-2n}_p(-1)^n(p-x^2_p)^n \\
                     &\equiv \frac{1}{2}x^j_p\sum^{j/2}_{n=0}\binom{j}{2n} \equiv 2^{j-2}x^j_p \not\equiv 0 \pmod{p} 
\end{align*}
If $D\equiv 1,2 \pmod{4}$ then $p=(x_p+y_p/2)^2+Dy_p^2/4$, in particular $x_p+y_p/2\not\equiv 0\pmod{p}$. we consider the binomial expansion
\begin{align*}
    R_{D,j}(x_p,y_p) &= \text{Re}\Big(x_p+y_p/2+\sqrt{-D}y_p/2\Big)^j
                     = \frac{1}{2} \sum^{j/2}_{n=0}\binom{j}{2n}\Big(x_p+\frac{y_p}{2}\Big)^{j-2n}(-1)^n\Big(\frac{Dy^2_p}{4}\Big)^n \\
                     &= \frac{1}{2} \sum^{j/2}_{n=0}\binom{j}{2n}\Big(x_p+\frac{y_p}{2}\Big)^{j-2n}(-1)^n\Big(p-\Big(x_p+\frac{y_p}{2}\Big)^2\Big)^n  \\
                     &\equiv \frac{1}{2} \Big(x_p+\frac{y_p}{2}\Big)^j\sum^{j/2}_{n=0}\binom{j}{2n} \equiv 2^{j-2}\Big(x_p+\frac{y_p}{2}\Big)^j \not\equiv 0  \pmod{p} 
\end{align*}
\end{proof}
\begin{proposition}\label{2}
For prime 2, $\Lambda^2_D$ is nonempty only for $D=1,2,7$. In this case $a(D,j,2)$ does not vanish for all $j\in 2\Z^+$. Moreover, we have that $a(7,j,2)\equiv1\pmod{2}$
\end{proposition}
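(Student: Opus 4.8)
The plan is to treat the three assertions separately: nonemptiness of $\Lambda^2_D$; non-vanishing of $a(D,j,2)$ for the even $j$ that matter (those with $u_D\mid j$ --- for the remaining even $j$ one already has $a(D,j,2)=0$ by Proposition~\ref{R}, so the literal content concerns $j\in u_D\Z^+$); and the congruence $a(7,j,2)\equiv1\pmod2$.

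For nonemptiness I would argue directly from the norm form. If $D\equiv1,2\pmod4$ the equation is $x^2+Dy^2=2$: the case $y=0$ forces $x^2=2$, impossible, and $|y|\ge1$ forces $D\le2$, giving $D=1$ with $\Lambda^2_1=\{(\pm1,\pm1)\}$ and $D=2$ with $\Lambda^2_2=\{(0,\pm1)\}$. If $D\equiv3\pmod4$, multiply $x^2+xy+\frac{1+D}{4}y^2=2$ by $4$ to get $(2x+y)^2+Dy^2=8$: again $y=0$ is impossible and $|y|\ge1$ forces $D\le8$, so $D\in\{3,7\}$; for $D=3$ one would need $(2x+y)^2=5$, impossible, while for $D=7$ one gets $(2x+y)^2=1$, with $\Lambda^2_7=\{(0,\pm1),(1,-1),(-1,1)\}$. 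Hence $\Lambda^2_D\ne\emptyset$ precisely for $D\in\{1,2,7\}$.

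For the non-vanishing I would invoke Proposition~\ref{I}: since $j$ is even the imaginary parts cancel, so $a(D,j,2)=\frac1{u_D}\sum_{(x,y)\in\Lambda^2_D}\alpha_{x,y}^{\,j}$, where $\alpha_{x,y}$ is the element $x+\sqrt{-D}y$ (resp.\ $x+\frac{1+\sqrt{-D}}{2}y$) of $\mathcal{O}_D$. For $D=1$ the prime $2$ is ramified and $\Lambda^2_1$ is the single unit orbit $\{i^k(1+i):0\le k\le3\}$; for $4\mid j$ this yields $a(1,j,2)=(1+i)^j=\bigl((1+i)^4\bigr)^{j/4}=(-4)^{j/4}\ne0$ (and $a(1,j,2)=0$ when $j\equiv2\pmod4$, consistent with Proposition~\ref{R}). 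For $D=2$, $\Lambda^2_2=\{\pm\sqrt{-2}\}$ gives $a(2,j,2)=(-2)^{j/2}\ne0$ for every even $j$. For $D=7$ the prime $2$ splits as $2=\pi\bar\pi$ with $\pi=\frac{1+\sqrt{-7}}{2}$ and $\mathcal{O}_7^\times=\{\pm1\}$, so $\Lambda^2_7=\{\pm\pi,\pm\bar\pi\}$ and $a(7,j,2)=\pi^j+\bar\pi^j=:t_j$ for every even $j$.

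It then remains to analyze $t_j$, and here I would use the Lucas-type recursion $t_j=(\pi+\bar\pi)t_{j-1}-N(\pi)t_{j-2}=t_{j-1}-2t_{j-2}$ with $t_0=2$ and $t_1=1$. Reducing modulo $2$ collapses this to $t_j\equiv t_{j-1}\pmod2$, whence $t_j\equiv1\pmod2$ for all $j\ge1$; in particular $a(7,j,2)=t_j$ is odd, hence nonzero, for every even $j$. There is no serious obstacle here: the only place any care is needed is the bookkeeping in the nonemptiness step and the verification that the listed points exhaust each $\Lambda^2_D$; everything else is a handful of explicit identities. One could additionally record $v_2\bigl(a(D,j,2)\bigr)=j/2$ for $D=1,2$, since it is exactly this $2$-adic valuation that, via the Hecke recursion of Lemma~\ref{Hecke prop}, propagates non-vanishing from $\Lambda^2_D$ to the higher $2$-power shells.
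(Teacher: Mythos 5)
Your proof is correct, and in two of the three steps it takes a genuinely different (and cleaner) route than the paper. For nonemptiness the paper argues via the splitting behaviour of the ideal $(2)$ in $\mathcal{O}_D$ (ramified iff $2\mid\Delta_{\mathcal{O}_D}$, split iff $-D\equiv1\pmod 8$), whereas you bound the norm form directly and enumerate the solutions; your way is more elementary and has the side benefit of producing the explicit point sets $\Lambda^2_D$ that you then feed into the second step. For the congruence $a(7,j,2)\equiv1\pmod2$ the paper runs an induction in steps of $2$, tracking separately the parities of the numerators of ${\rm Re}\,\pi^j$ and ${\rm Im}\,\pi^j$ for $\pi=\tfrac{1+\sqrt{-7}}{2}$; you instead observe that $t_j=\pi^j+\bar\pi^j$ satisfies the Lucas recursion $t_j=t_{j-1}-2t_{j-2}$, which collapses mod $2$ to $t_j\equiv t_{j-1}$, giving the result in one line. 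You also handle a point the paper glosses over: the assertion ``$a(D,j,2)\neq0$ for all $j\in2\Z^+$'' is literally false for $D=1$, $j\equiv2\pmod4$ (where $a(1,j,2)=0$, consistent with Proposition~\ref{R}); you correctly restrict the nontrivial content to $j\in u_D\Z^+$, which is all that the maximality argument in the proof of Theorem~\ref{Main} uses. Your closing remark that $v_2(a(D,j,2))=j/2$ for $D=1,2$ is not needed (the paper instead uses $\chi(2)=0$ in the Hecke recursion for the ramified cases), but it is a correct alternative way to propagate non-vanishing to $2$-power shells.
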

\begin{proof}
For $D=1,2$, $2|\Delta_{\mathcal{O}_D}(=-4D)$ so the ideal $(2)$ is ramified in $\mathcal{O}_D$, in particular there are elements of norm 2.  For $D\in\{3,7,11,19,43,67,163\}$, $2\nmid\Delta_{\mathcal{O}_D}(=-D)$. So the ideal $(2)$ is unramified in $\mathcal{O}_D$. Here we need to check whether 2 splits or not. We have the condition that 2 splits if and only if $-D\equiv 1\pmod{8}$. Only $D=7$ satisfies the condition. 

A brute force calculation shows that $a(1,j,2)=(1+i)^j\not=0$, $a(2,j,2)=i^j2^{j+1}\not=0,$ and $a(7,j,2)= 4{\rm Re}\Big(\frac{1+\sqrt{-7}}{2}\Big)^j\not=0.$ 

We prove that $a(7,j,2)\equiv1\pmod{2}$ using induction on even $j$. First, note that $a(7,2,2)=-3\equiv1\pmod{2}$. Now we assume that $a(7,j,2)\equiv1\pmod{2}$, which implies that ${\rm Re}\Big(\frac{1+\sqrt{-7}}{2}\Big)^j=(2k+1)/2$ for some $k$. The norm of $\Big(\frac{1+\sqrt{-7}}{2}\Big)^j$ is even, so we get that ${\rm Im}\Big(\frac{1+\sqrt{-7}}{2}\Big)^j=\sqrt{7}(2k'+1)/2$ for some $k'$. An easy calculation shows that $a(7,j+2,2)=-3{\rm Re}\Big(\frac{1+\sqrt{-7}}{2}\Big)^j-\sqrt{7}{\rm Im}\Big(\frac{1+\sqrt{-7}}{2}\Big)^j\equiv1\pmod{2}.$
\end{proof}
\subsection{Proof of Theorem \ref{Main}}
Proposition \ref{odd}, \ref{I} and \ref{R} together imply that $a(\Lambda_D,R_{D,j},r)$ and $a(\Lambda_D,I_{D,j},r)$ vanish for all $j\not\equiv 0\pmod{u_D}$, which implies that every nonempty shell $\Lambda^r_D$ is an ellipsoidal $T_D$-design (remember that $T_D=\Z^+\setminus u_D\Z^+$).

Now we prove the maximality of $T_D$. We show that $a(D,j,r)\not=0$ (note that $a(D,j,r)= \frac{1}{u_D}a(\Lambda_D,R_{D,j},r)$) for all $j\not\in T_D$ and $\Lambda^r_D$ nonempty. 
By Lemma \ref{Hecke prop}, enough to take $r$ to be a prime power. Suppose $p$ be a prime and $\alpha\geq 1$ be such that $\Lambda^{p^{a}}_D\not=\phi$. There are two cases possible, either $\Lambda^p_D$ is empty or it is not. First suppose $\Lambda^p_D$ is nonempty. 
If $p$ is 2 then $a(D,j,2)\not=0$ by Proposition \ref{2}. By part{\it (2)} of Lemma \ref{Hecke prop}, we have that $a(D,j,2^{\alpha})=a(D,j,2)^{\alpha}\not=0$ for $D=1,2$ since $\chi(2)=0$. When $D=7$ then part{\it (3)} of Lemma \ref{Hecke prop}, we have $a(7,j,2^{\alpha})\not=0$.
If $p$ is an odd prime, then Lemma \ref{non-zero} implies that $a(D,j,p)\not=0$. Now using part{\it (3)} of Lemma \ref{Hecke prop} again, we have $a(D,j,p^{\alpha})\not=0$. 
Suppose $\Lambda^p_D$ is empty then $a(D,j,p)=0$ and Lemma \ref{prime-decom} implies $\alpha$ is even. Now by part{\it (2)} of Lemma \ref{non-zero}, we get $a(D,j,p^{\alpha})=p^{j\alpha/2}\not=0$ (note that this case includes 2 too). So we get that $a(D,j,p^{\alpha})\not=0$ whenever $\Lambda^{p^{\alpha}}_D$ is nonempty.

\end{document}